\renewcommand{\setminus}{{\smallsetminus}}
\newcommand{\ZZ}{{\mathbb{Z}}}
\newcommand{\QQ}{{\mathbb{Q}}}
\newcommand{\bdy}{{\partial}}
\newcommand{\G}{{\mathbb{G}}}
\newcommand{\GA}{{\mathbb{G}_A}}
\newcommand{\GB}{{\mathbb{G}_B}}
\theoremstyle{plain}
\newtheorem{theorem}{Theorem}
\newtheorem{lemma}[theorem]{Lemma}
\newtheorem*{namedtheorem}{\theoremname}
\newcommand{\theoremname}{testing}
\theoremstyle{definition}
\newtheorem{define}[theorem]{Definition}
\newtheorem{example}[theorem]{Example}
\begin{document}
\title[Slopes and colored Jones polynomials of adequate knots]{Slopes
and colored Jones polynomials \\ of adequate knots}

\author[D. Futer]{David Futer}
\author[E. Kalfagianni]{Efstratia Kalfagianni}
\author[J. Purcell]{Jessica S. Purcell}

\address[]{Department of Mathematics, Temple University,
Philadelphia, PA 19122}

\email[]{dfuter@temple.edu}

\address[]{Department of Mathematics, Michigan State University, East
Lansing, MI, 48824}

\email[]{kalfagia@math.msu.edu}

\address[]{ Department of Mathematics, Brigham Young University,
Provo, UT 84602}

\email[]{jpurcell@math.byu.edu }

\thanks{{E.K. is supported in part by NSF grant DMS--0805942.}}

\thanks{{J.P. is supported in part by NSF grant DMS--0704359.}}

\thanks{ \today}

\begin{abstract}
Garoufalidis conjectured a relation between the boundary slopes of a
knot and its colored Jones polynomials. According to the conjecture, certain boundary slopes
are detected by the sequence of degrees of the colored Jones polynomials. 
We verify this conjecture for \emph{adequate} knots, a class
that vastly generalizes that of alternating knots.
\end{abstract}

\maketitle

\section{Introduction}\label{sec:intro}
For a knot $K \subset S^3$, let $N_K$ denote a tubular neighborhood of
$K$ and let $M_K:=\overline{ S^3\setminus N_K}$ denote the exterior of
$K$. Let $\langle \mu, \lambda \rangle$ be the canonical
meridian--longitude basis of $H_1 (\bdy N_K)$.  An element $p/q \in
{\QQ}\cup \{ 1/0\}$ is called a \emph{boundary slope} of $K$ if there
is a properly embedded essential surface $(S, \bdy S) \subset (M_K,
\bdy N_K)$, such that every circle of $\bdy S$ is homologous to $p \mu + q \lambda \in
H_1 (\bdy N_K)$.  Hatcher has shown that every knot $K \subset S^3$
has finitely many boundary slopes \cite{hatcher:boundary-slopes}.

The \emph{colored Jones function} of $K$ is a sequence of Laurent
polynomial invariants $J_K(n, q) \in {\ZZ}[q, \ q^{-1}]$, for $n=1,2,
\ldots$. For $n=2$, $J_K(2,q)$ is the ordinary Jones polynomial. We
will use the normalization that $J_{\rm unknot}(n, q)=1$, for every $n
\geq 1$.  Let $j(n)$ denote the highest degree of $J_K(n, q)$ in $q$,
and let $j^{*}(n)$ denote the lowest degree. Consider the sequences
$$js_K:= \left\{{{ 4j(n)}\over {n^2} } \: : \: n > 0\right\} \quad
 \mbox{and} \quad js^*_K:= \left\{ {{ 4j^{*}(n)}\over {n^2}} \: : \: n
 > 0 \right\}.$$
Garoufalidis conjectured \cite{garoufalidis:jones-slopes} that for each knot $K$, every cluster point
(i.e., every limit of a subsequence) of $js_K$ or $js^*_K$ is a
boundary slope of $K$.   Thus, if the conjecture holds, the colored Jones polynomials detect certain boundary slopes of $K$. 
He verified
the conjecture for alternating knots, torus knots, pretzel knots of
type $(-2, 3, p)$, and several low crossing knots.

In this paper, we prove Garoufalidis' conjecture for the class of
adequate knots. The precise definition of \emph{adequate} appears in
Section \ref{sec:state-graphs}. For the moment, we note that the
family of adequate knots includes all alternating knots, most
Montesinos knots, and all knots that are Conway sums of two
\emph{strongly alternating} tangles. See \cite{lick-thistle:adequate-knots} and Section
\ref{sec:examples} for more examples.

\begin{theorem}\label{thm:main}
Let $D(K)$ be a knot diagram. Then
\begin{itemize}
\item[(a)] If $D$ is $A$--adequate, then $\displaystyle{ \lim_{n \to
\infty} 4\, n^{-2}j^*(n)}$ exists, and is a boundary slope of $K$.
\item[(b)] If $D$ is $B$--adequate, then $\displaystyle{\lim_{n \to
\infty} 4\, n^{-2}j(n)}$ exists, and is a boundary slope of $K$.
\end{itemize}
In particular, if $K$ is a non-trivial adequate knot, then the set
$js_K \cup js^*_K$ has exactly two cluster points, both of which are integer 
boundary slopes of $K$.
\end{theorem}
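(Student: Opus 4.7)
The plan is to prove part (a) first; part (b) then follows by applying (a) to the mirror of $K$, since mirroring swaps $A$- and $B$-adequacy and negates boundary slopes.

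The first task is to compute $j^*(n)$ explicitly for an $A$-adequate diagram $D$. Expanding the $n$-th colored Jones polynomial via the Kauffman bracket state sum (with each crossing replaced by a linear combination of Jones--Wenzl projectors) yields, under the $A$-adequacy hypothesis, a genuine quadratic expression for $j^*(n)$ in $n$. The point of $A$-adequacy is that the state contributing the lowest power of $q$ survives without cancellation, so the extremal degree is computed purely from the combinatorics of the all-$A$ resolution: the crossing number $c(D)$, the writhe $w(D)$, and the number of state circles $|s_A(D)|$. In particular $\displaystyle{\lim_{n\to\infty} 4n^{-2}j^*(n)}$ exists and equals an explicit integer depending only on these diagrammatic quantities.

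The second step is to produce an essential spanning surface that realizes this slope. The natural candidate is the \emph{all-$A$ state surface} $S_A$, obtained by capping off the circles of the all-$A$ resolution with disks and attaching a half-twisted band at each crossing of $D$. A theorem of Ozawa (reproved and refined in the authors' prior work on guts of state surfaces) ensures that, when $D$ is $A$-adequate, $S_A$ is essential in $M_K$. One can then read off the boundary slope of $S_A$ directly from the diagram by computing how the curves of $\bdy S_A$ meet the meridian and longitude on $\bdy N_K$; the result is again an integer expressed in terms of $c_\pm(D)$, $w(D)$, and $|s_A(D)|$.

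The third step is identification: comparing the formula from Step 1 to the slope from Step 2 reduces to a direct algebraic check showing the two integers agree. The ``in particular'' clause then follows: for a non-trivial adequate knot, each of $js_K^*$ and $js_K$ converges to a single integer boundary slope, so $js_K \cup js_K^*$ has exactly two cluster points (they are distinct, since otherwise $K$ would be fibered in a way forcing a contradiction with non-triviality). The main obstacle I anticipate is the essentialness of $S_A$ in full generality; once this is in hand, the rest of the argument consists of a careful state-sum bookkeeping and a matching of closed-form expressions. A minor secondary point is verifying that the quadratic behavior is exact rather than merely asymptotic, so that we obtain a true limit and not just a cluster point, but this also follows from the state-sum computation.
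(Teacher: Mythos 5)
Your proposal follows essentially the same route as the paper: compute the extremal degree of $J_K(n,q)$ from the Kauffman bracket state sum (adequacy preventing cancellation, giving exact quadratic growth), invoke Ozawa's theorem that $S_A$ is essential, read off the slope of $S_A$ from the diagram, and match the two integers, with (b) obtained by mirroring. One small correction: the distinctness of the two cluster points should not be argued via fiberedness; it follows directly from the explicit values $-2c_-$ and $2c_+$, which can coincide only if $c_+ = c_- = 0$, i.e.\ only if $K$ is trivial.
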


The proof of Theorem \ref{thm:main} involves four steps:
\begin{enumerate}
\item Starting with an $A$--adequate diagram $D(K)$, construct a
\emph{state surface} $S_A$ whose boundary is $K$. This is a standard
construction, generalizing the construction of a checkerboard surface.
\item Verify that $S_A$ is an essential surface. This result, stated as
Theorem \ref{thm:essential} below, was first proved by Ozawa
\cite{ozawa:adequate}; an alternate proof is given by the authors in \cite{fkp:guts}.
\item Relate the boundary slope of $S_A$ to the number of positive and
negative crossings in the diagram $D$. This is carried out in Lemma
\ref{lemma:surface-slope}.
\item Relate the limit of $js^*_K$ to the the number of positive and
negative crossings in the diagram $D$. This is carried out in Lemma
\ref{lemma:adequate-limit}.
\end{enumerate}
Taken together, Theorem \ref{thm:essential} and Lemmas
\ref{lemma:surface-slope} and \ref{lemma:adequate-limit} immediately
imply Theorem \ref{thm:main}(a). Part (b) of the theorem follows by
considering the mirror image of the diagram $D$.

\section{State graphs and surfaces}\label{sec:state-graphs}
Let $D$ be a link diagram, and $x$ a crossing of $D$.  Associated to
$D$ and $x$ are two link diagrams, each with one fewer crossing than
$D$, called the \emph{$A$--resolution} and \emph{$B$--resolution} of
the crossing.  See Figure \ref{fig:splicing}.

\begin{figure}[h]
	\centerline{\input{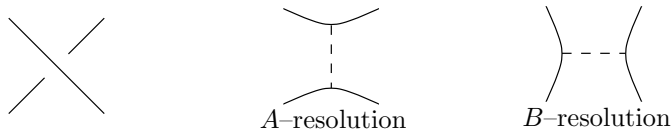}}
\caption{$A$-- and $B$--resolutions of a crossing.}
\label{fig:splicing}
\end{figure}

A Kauffman state $\sigma$ is a choice of $A$--resolution or
$B$--resolution at each crossing of $D$. Corresponding to every state
$\sigma$ is a crossing--free diagram $s_\sigma$: this is a collection
of circles in the projection plane. We can encode the choices that led
to the state $\sigma$ in a graph $\G_\sigma$, as follows. The vertices
of $\G_\sigma$ are in $1-1$ correspondence with the state circles of
$s_\sigma$. Every crossing $x$ of $D$ corresponds to a pair of arcs
that belong to circles of $s_\sigma$; this crossing gives rise to an
edge in $\G_\sigma$ whose endpoints are the state circles containing
those arcs.

Every Kauffman state $\sigma$ also gives rise to a surface $S_\sigma$,
as follows. Each state circle of $\sigma$ bounds a disk in $S^3$. This
collection of disks can be disjointly embedded in the ball below the
projection plane. At each crossing of $D$, we connect the pair of
neighboring disks by a half-twisted band to construct a surface
$S_\sigma \subset S^3$ whose boundary is $K$. See Figure
\ref{fig:statesurface} for an example where $\sigma$ is the all--$A$
state.

Well--known examples of state surfaces include Seifert surfaces (where
the corresponding state $\sigma$ is defined by following an
orientation on $K$) and checkerboard surfaces for alternating links
(where the corresponding state $\sigma$ is either the all--$A$ or
all--$B$ state). In this paper, we focus on the all--$A$ and all--$B$
states of a diagram, but we do not require our diagrams to be
alternating. Thus our surfaces are generalizations of checkerboard
surfaces.

\begin{define}
A link diagram $D$ is called \emph{$A$--adequate} if the state graph
$\GA$ corresponding to the all--$A$ state contains no 1--edge loops.
Similarly, $D$ is called $B$--adequate if the all--$B$ graph $\GB$
contains no 1--edge loops.  A link diagram is \emph{adequate} if it is
both $A$-- and $B$--adequate.  A link that admits an adequate diagram
is also called \emph{adequate}.
\end{define}

\begin{figure}
\includegraphics{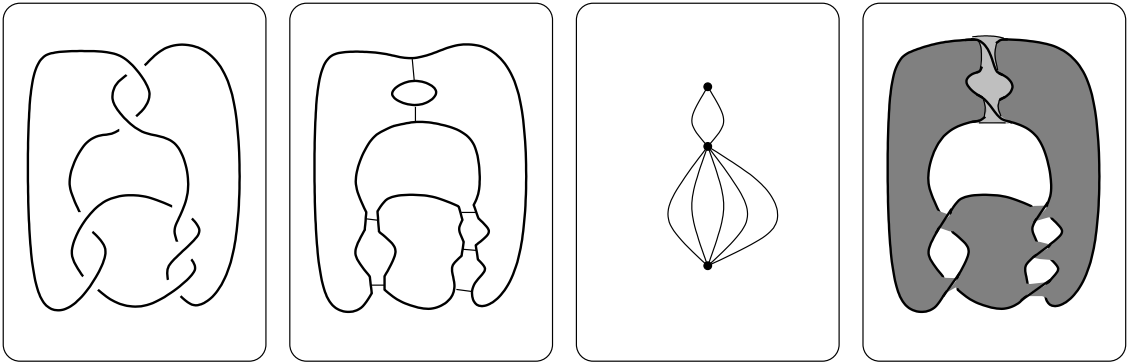}
\caption{Left to right:  A diagram.  The diagram with $A$--resolutions at each crossing.  The
graph $\GA$.  The state
	surface $S_A$.}
\label{fig:statesurface}
\end{figure}

Adequate diagrams are quite common. All reduced alternating diagrams
are adequate. Every $n$--string planar cable of an adequate diagram is
adequate. The standard diagram of a Montesinos link is either
$A$--adequate or $B$--adequate, and typically both. Finally, observe
that a diagram $D(K)$ is $A$--adequate if and only if its mirror image
is $B$--adequate. This observation is useful for the proofs below:
once a result is proved for the all--$A$ state surface $S_A$ of an
$A$--adequate diagram, the corresponding statement about $B$--adequate
diagrams follows by reflection.

\begin{theorem}[Ozawa \cite{ozawa:adequate}]\label{thm:essential} 
Let $D$ be an $A$--adequate diagram of a knot $K$. Then the state
surface $S_A$ is incompressible and $\bdy$--incompressible in the
complement $M_K$. Similarly, if $D$ is a $B$--adequate diagram of a
knot $K$, then $S_B$ is incompressible and
$\bdy$--incompressible.
\end{theorem}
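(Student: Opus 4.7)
The plan is to argue by contradiction: assume $S_A$ admits an essential compressing disk or $\bdy$-compressing disk $D'$ in $M_K$, and extract from $D'$ a 1-edge loop in $\GA$, contradicting $A$-adequacy. Throughout, I regard $S_A$ as the union of the state-circle disks $\Delta_i$ (one per vertex of $\GA$) glued together by half-twisted crossing bands $B_x$ (one per edge of $\GA$, equivalently one per crossing of $D$).

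First I would put $D'$ in general position with respect to this decomposition and apply standard three-manifold simplifications. Innermost-disk arguments on each $\Delta_i$ remove all closed curves of $D'\cap S_A$, and outermost-arc arguments remove any arc of $D'\cap\Delta_i$ that can be pushed off the boundary of $\Delta_i$ through a subdisk of $D'$ disjoint from $S_A$. After these reductions I may assume $D'\cap S_A$ is a disjoint union of arcs: on each crossing band $B_x$, a family of parallel arcs running transversely from one long side of the band to the other; on each disk $\Delta_i$, a collection of properly embedded essential arcs with endpoints on the attaching arcs of the bands $B_x$. Following $\partial D'$ then records a closed edge-path $\gamma$ in $\GA$, crossing an edge each time $\partial D'$ traverses a band and passing through a vertex each time it runs through a disk.

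The crux is to show that if $D'$ is chosen to minimize $|D'\cap \bigcup_x B_x|$, then $\gamma$ has combinatorial length one. If $\gamma$ had length at least two, two consecutive edges of $\gamma$ meeting at a common vertex $\Delta_i$ should cobound a subdisk of $D'$ whose boundary can be used to isotope $D'$ across the corresponding band, strictly reducing $|D'\cap \bigcup_x B_x|$ and contradicting the minimality choice. Once $\gamma$ has length one, the single edge of $\GA$ it traverses is a 1-edge loop, contradicting $A$-adequacy. The main obstacle in carrying this out rigorously is verifying that the required shortcut isotopy can actually be performed inside the complementary regions of $S_A$ in $M_K$; this forces a careful analysis of how the bands $B_x$ and disks $\Delta_i$ sit in $S^3\setminus K$, most naturally via a polyhedral decomposition of $M_K\setminus S_A$ inherited from the $A$-resolution diagram $s_A$, together with a check that the arcs of $D'\cap \Delta_i$ cannot be forced to ``wrap'' around $\Delta_i$ in a way that prevents the shortcut. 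The $\bdy$-incompressibility statement is handled by the same argument applied to disks with part of their boundary on $\bdy N_K$, with that arc absorbed into the edge-path $\gamma$, and the $B$-adequate case follows by passing to the mirror diagram.
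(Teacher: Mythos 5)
The paper does not actually prove this theorem: it is quoted from Ozawa, whose argument builds $S_A$ as an iterated Murasugi sum of state surfaces of smaller (essentially alternating) pieces and invokes the fact that Murasugi summing preserves essentiality; the authors' own alternate proof, deferred to \cite{fkp:guts}, runs through a polyhedral decomposition of $S^3 \setminus S_A$ and normal surface theory. Your outline is closer in spirit to that second route, but as written it has a genuine gap, and it sits exactly at the point you flag yourself: the claim that a compressing disk $D'$ chosen to minimize $|D'\cap \bigcup_x B_x|$ yields an edge-path $\gamma$ of length one. The assertion that two consecutive band-crossings of $\bdy D'$ meeting at a common vertex disk $\Delta_i$ ``cobound a subdisk of $D'$ whose boundary can be used to isotope $D'$ across the corresponding band'' is not justified and is not obviously true: the relevant subdisk of $D'$ lies in a complementary region of $S_A$ in $M_K$, which is not a product neighborhood of the band; the band is half-twisted with strands of $K$ running along its boundary, so the proposed push can be obstructed by the knot or can create new intersections with other disks $\Delta_j$ and bands $B_y$; and the arcs of $D'\cap S_A$ need not be nested in $D'$ in the order your shortcut presumes. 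Controlling exactly these phenomena is the entire content of the theorem, and in the known proofs it is handled by heavy machinery (Murasugi sum decompositions in Ozawa's argument, or the checkerboard-colored ideal polyhedra and normal-form analysis in \cite{fkp:guts}), with $A$--adequacy entering throughout that analysis rather than only at the final step where the $1$--edge loop is excluded.

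A secondary issue: the preliminary reductions are also more delicate than stated. Removing closed curves and inessential arcs of $D'\cap S_A$ by innermost-disk and outermost-arc moves requires knowing that the relevant subdisks can be exchanged or isotoped within $M_K$ without increasing the complexity you are minimizing, and in the $\bdy$--compression case the arcs on $\bdy N_K$ must be handled compatibly with the framing of $S_A$; none of this is automatic from general position alone. So the proposal is a reasonable plan of attack, and its overall architecture (minimal position, read off an edge-path in $\GA$, contradict adequacy) matches the normal-surface proof in outline, but the decisive intersection-reducing step is asserted rather than proved, so it does not yet constitute a proof of Theorem~\ref{thm:essential}.
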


Ozawa's original proof of this theorem relies on building up the surface $S_\sigma$ via Murasugi sums. An
alternate proof from the point of view of normal surface theory will be
given by the authors in \cite{fkp:guts}, where we will also relate the coefficients of 
the colored Jones polynomials $J_K(n,q)$ to the size of the \emph{guts} of the surfaces $S_A$ and $S_B$. These guts can be viewed as the hyperbolic pieces in the geometric decomposition of $S^3 \setminus S_\sigma$. Thus, taken together, \cite{fkp:guts} and Theorem \ref{thm:main} of this paper establish two separate connections between the colored Jones polynomials and classical geometric topology.

\medskip

Recall from the Introduction that if $S \subset M_K$ is a surface such
that $\bdy S$ represents the homology class $p \mu + q \lambda \in
H_1(\bdy M_K)$, we say the boundary slope of $S$ is $p/q \in \QQ \cup
\{ \infty \}$. It turns out that the boundary slope of a state surface
$S_\sigma$ is easy to read from a diagram $D$.

Suppose that $D(K)$ is a diagram of an oriented knot $K$. Then every
crossing of $D$ can be classified as either positive or negative, as
in Figure \ref{fig:local-slope}. For a state $\sigma$ of $D$, let
$c_+^B(\sigma)$ be the number of positive crossings at which $\sigma$
chooses the $B$--resolution. Similarly, let $c_-^A(\sigma)$ be the
number of negative crossings at which $\sigma$ chooses the
$A$--resolution.

\begin{lemma}\label{lemma:state-slope}
Let $D(K)$ be a diagram of an oriented knot $K$, and let $\sigma$ be a
state of $D$. Then the state surface $S_\sigma$ has as its boundary
the slope $2c_+^B(\sigma) - 2c_-^A(\sigma)$.
\end{lemma}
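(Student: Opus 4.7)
My plan is to compute the boundary slope of $S_\sigma$ by comparison with the Seifert surface of $K$, whose slope is $0$. Fix an orientation on $K$, and let $\sigma_S$ denote the Seifert state, defined by choosing the oriented resolution at every crossing; the corresponding state surface $S_{\sigma_S}$ is the standard Seifert surface, and its boundary represents the canonical longitude in $H_1(\bdy N_K)$, so its slope is $0$. A direct inspection of the crossing pictures (using the standard conventions for $A$- and $B$-resolutions) shows that $\sigma_S$ selects the $A$-resolution at every positive crossing and the $B$-resolution at every negative crossing. Consequently, the crossings where $\sigma$ differs from $\sigma_S$ are precisely the $c_+^B(\sigma)$ positive crossings where $\sigma$ chose $B$, together with the $c_-^A(\sigma)$ negative crossings where $\sigma$ chose $A$.

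The heart of the argument is a local computation. Suppose $\sigma$ and $\sigma'$ are two states that agree at every crossing except one crossing $x$, and suppose the switch at $x$ is from the $A$-resolution to the $B$-resolution. I claim that the boundary slope of $S_{\sigma'}$ is exactly $2$ greater than that of $S_\sigma$, independent of the sign of $x$. To verify this, I would compute the boundary slope as the self-linking number of $K$ with its push-off $K''$ along the surface. Since $S_\sigma$ and $S_{\sigma'}$ coincide outside a small ball $B_x$ around $x$ and differ only by the choice of half-twisted band inside $B_x$, the corresponding push-offs agree outside $B_x$, so the slope difference is localized at $x$. A direct picture-based computation of the signed crossings between $K$ and its push-off inside $B_x$ then shows that this local contribution is $+2$.

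Once the local claim is established, the lemma follows by induction on the number of crossings where $\sigma$ differs from $\sigma_S$. Starting from $\sigma_S$ (slope $0$) and applying single-crossing switches to reach $\sigma$, each of the $c_+^B(\sigma)$ switches $A \to B$ at a positive crossing contributes $+2$, while each of the $c_-^A(\sigma)$ switches $B \to A$ at a negative crossing contributes $-2$. Summing gives the desired formula $2c_+^B(\sigma) - 2c_-^A(\sigma)$. The main obstacle is the local slope computation: the precise sign and magnitude of the contribution depend on consistent conventions for orientations, crossing signs, $A$/$B$-resolutions, and push-off direction, and are best verified using a labeled local picture around a crossing.
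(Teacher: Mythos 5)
Your argument is correct in outline, but it takes a somewhat different route from the paper, and one step needs to be rephrased to be accurate. The paper also starts from the observation that the Seifert state chooses the $A$--resolution at positive crossings and the $B$--resolution at negative crossings and has slope $0$; but instead of inducting over single resolution switches, it computes the slope of an arbitrary $S_\sigma$ in one pass: the slope is the integer $lk(K,L)$, where $L$ is the curve $S_\sigma \cap \bdy M_K$, and this linking number is evaluated as the algebraic intersection of $L$ with a fixed Seifert surface, summing local contributions over all crossings at once ($0$ where $\sigma$ agrees with the Seifert state, $+2$ at positive crossings resolved $B$, $-2$ at negative crossings resolved $A$). Your induction over states that differ at a single crossing proves the same thing, and your claim that an $A\to B$ switch shifts the slope by $+2$ independently of the crossing sign is indeed consistent with (and equivalent to) the paper's local contributions. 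Both you and the paper ultimately defer the $\pm 2$ count to a local picture, so the level of rigor is comparable; your version buys a statement about how the slope changes under a single resolution change, while the paper's version avoids induction and any comparison of two different state surfaces.

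The step to fix: it is not true that $S_\sigma$ and $S_{\sigma'}$ ``coincide outside a small ball $B_x$.'' Changing the resolution at one crossing merges or splits state circles, so the collection of disks---and hence the surface---changes globally, not just inside $B_x$. What is true, and what your argument actually needs, is that the germ of the surface along $K$ is unchanged outside $B_x$: away from crossings the surface meets $K$ along a collar descending from the projection plane, and the half-twisted bands at all other crossings are identical, so the push-offs of $K$ into the two surfaces agree outside $B_x$. Since the two push-offs also have the same endpoints on $\bdy B_x$, the difference $lk(K,K'_{\sigma'})-lk(K,K'_{\sigma})$ is indeed computed by the signed crossings inside $B_x$, and the rest of your argument goes through once the local $+2$ is checked from a labeled picture (together with the observation, which the paper makes explicitly, that $S_\sigma$ meets a meridian once, so the slope is the integer given by this surface framing).
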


This lemma was observed by Curtis and Taylor for checkerboard surfaces
of alternating knots \cite[Proposition 2.6]{curtis-taylor}. However, both
the statement and the proof hold in complete generality: $S_\sigma$ is
not even required to be an essential surface.

\begin{figure}[h]
\includegraphics{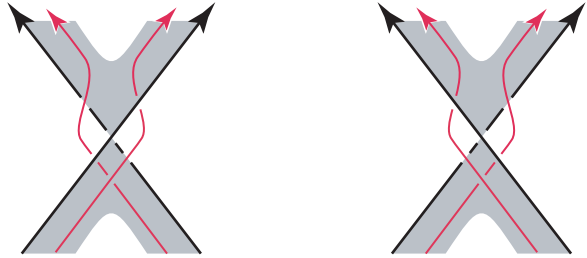}
\caption{Left: a positive crossing, and a piece of state surface
$S_\sigma$ that chooses the $B$--resolution at this crossing. Locally,
this crossing contributes $+2$ to the slope of $S_\sigma$. Right: if
$\sigma$ chooses the $A$--resolution at a negative crossing, the slope
of $S_\sigma$ receives a local contribution of $-2$.}
\label{fig:local-slope}
\end{figure}

\begin{proof}
Suppose, first, that $\sigma$ is the Seifert state, and $S_\sigma$ is
an oriented Seifert surface constructed from the diagram $D$. To
follow an orientation of $K$, $\sigma$ must choose the $A$--resolution
at every positive crossing and the $B$--resolution at every negative
crossing (the opposite of the choices depicted in Figure
\ref{fig:local-slope}). Thus $c_+^B(\sigma) = c_-^A(\sigma) = 0$, by
definition. Also, because $\bdy S_\sigma$ is the boundary of an
orientable surface, this curve is null-homologous in $M_K$ and has
slope $0$. This verifies the lemma for the Seifert state.

Next, let $\sigma$ be an arbitrary state. Then it is still the case
that $S_\sigma$ intersects a meridian of $K$ only once. Thus the
boundary slope of $S_\sigma$ is still an integer $p$. Let $L$ be the
simple closed curve of intersection between $S_\sigma$ and $\bdy M_K$,
oriented in the same direction as $K$. Then the boundary slope $p$ of
$S_\sigma$ is the linking number $lk(K,L)$, or equivalently the
oriented intersection number between $L$ and a Seifert surface for
$K$.

The linking number $lk(K, L)$ can be computed by summing the local
contributions near each crossing. If $\sigma$ chooses the
$A$--resolution at a positive crossing or the $B$--resolution at a
negative crossing, $L$ is locally disjoint from the Seifert surface,
and the local contribution to the linking number is $0$. On the other
hand, for every positive crossing where $\sigma$ chooses the
$B$--resolution, the left panel of Figure \ref{fig:local-slope} shows
that the neighborhood of the crossing contributes $+2$ to the linking
number $lk(K, L)$. Similarly, the right panel of Figure
\ref{fig:local-slope} shows that a negative crossing where $\sigma$
chooses the $A$--resolution contributes $-2$ to $lk(K,L)$.

Adding up these contributions, we conclude that $lk(K, L) =
2c_+^B(\sigma) - 2c_-^A(\sigma)$.
\end{proof}

As a special case of Lemma \ref{lemma:state-slope}, we obtain the
boundary slopes of $S_A$ and $S_B$.

\begin{lemma}\label{lemma:surface-slope}  
Let $D(K)$ be a diagram of an oriented knot $K$. Let $c_+$ be the
number of positive crossings in $D$, and $c_-$ the number of negative
crossings. Then the all--$A$ surface $S_A$ has boundary slope $-2c_-$,
and $S_B$ has boundary slope $2c_+$.
\end{lemma}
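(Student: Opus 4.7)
The plan is to deduce this lemma directly from Lemma \ref{lemma:state-slope} by evaluating the two counting functions $c_+^B(\sigma)$ and $c_-^A(\sigma)$ on the specific states $\sigma = A$ (all--$A$) and $\sigma = B$ (all--$B$).

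First, I would take $\sigma$ to be the all--$A$ state. By definition, this state chooses the $A$--resolution at every crossing of $D$, so it never chooses the $B$--resolution anywhere; in particular $c_+^B(\sigma) = 0$. On the other hand, since $\sigma$ chooses the $A$--resolution at all negative crossings, $c_-^A(\sigma)$ is exactly the total number $c_-$ of negative crossings in $D$. Plugging into Lemma \ref{lemma:state-slope}, the boundary slope of $S_A$ is $2 \cdot 0 - 2 c_- = -2 c_-$.

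Next, I would run the symmetric calculation for the all--$B$ state. Here every crossing has the $B$--resolution chosen, so $c_+^B(\sigma) = c_+$, while no crossing has the $A$--resolution, forcing $c_-^A(\sigma) = 0$. Lemma \ref{lemma:state-slope} then gives the boundary slope of $S_B$ as $2 c_+ - 2 \cdot 0 = 2 c_+$, as claimed.

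Since the entire argument is just a bookkeeping specialization of a lemma already in hand, there is no real obstacle; the only thing to be careful about is not to confuse ``$A$--resolution at a negative crossing'' with ``$B$--resolution at a positive crossing,'' which are the two configurations that contribute nontrivially to the slope via the local pictures in Figure \ref{fig:local-slope}. Once these are correctly tabulated for the all--$A$ and all--$B$ states, the lemma follows immediately.
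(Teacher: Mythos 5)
Your proposal is correct and follows exactly the paper's own proof: both simply evaluate $c_+^B(\sigma)$ and $c_-^A(\sigma)$ for the all--$A$ and all--$B$ states and substitute into Lemma \ref{lemma:state-slope}. Nothing further is needed.
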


\begin{proof}
The all--$A$ state $\sigma$ chooses the $A$--resolution at every
crossing. Thus for the all--$A$ state, $c_+^B(\sigma) = 0$ and
$c_-^A(\sigma) = c_-$, hence $\bdy S_A$ has slope $-2c_-$ by Lemma
\ref{lemma:state-slope}.
Similarly, for the all--$B$ state $\sigma$, $c_+^B(\sigma) = c_+$ and
$c_-^A(\sigma) = 0$, hence $\bdy S_B$ has slope $2c_+$.
\end{proof}

\section{Colored Jones polynomials}\label{sec:colored}
In this section, we relate the degrees of colored Jones polynomials to
the numbers $c_+$ and $c_-$ of positive and negative crossings in a
diagram $D$. A good reference for the following discussion is
Lickorish's book \cite{lickorish:book}.

The colored Jones polynomials of a link $K$ have a convenient
expression in terms of \emph{Chebyshev polynomials}. For $n \geq 0$,
the polynomial $S_n(x)$ is defined recursively as follows:
\begin{equation}\label{eq:cheb-recursive}
S_{n+1} = x S_n - S_{n-1}, \qquad S_1(x) = x, \qquad S_0(x) = 1.
\end{equation}

Let $D$ be a diagram of a link $K$. For an integer $m > 0$, let $D^m$
denote the diagram obtained from $D$ by taking $m$ parallel copies of
$K$.  This is the $m$--cable of $D$ using the blackboard framing; if
$m=1$ then $D^1=D$.  Let $\langle D^m \rangle$ denote the Kauffman
bracket of $D^m$: this is a Laurent polynomial over the integers in a
variable $A$.  Let $w=w(D) = c_+ - c_-$ denote the writhe of
$D$. Then we may define the function
\begin{equation}\label{eq:unreduced}
G(n+1, A):= \left((-1)^n A^{n^2+2n} \right)^{-w} (-1)^{n-1}
\left(\frac{A^4 - A^{-4}}{A^{2n} - A^{-2n}} \right) \langle S_n( D)
\rangle,
\end{equation}
where $S_n(D)$ is a linear combination of blackboard cablings of $D$,
obtained via equation (\ref{eq:cheb-recursive}), and the notation
$\langle S_n(D) \rangle$ means extend the Kauffman bracket linearly.
That is, for diagrams $D_1$ and $D_2$ and scalars $a_1$ and $a_2$,
$\langle a_1 D_1 + a_2 D_2 \rangle = a_1 \langle D_1\rangle +
a_2\langle D_2\rangle$.
For the results below, the important corollary of the recursive
formula for $S_n(x)$ is that
\begin{equation}\label{eq:chebyshev}
S_n(D) = D^n + (1-n)D^{n-2} + \mbox{lower degree cablings of } D.
\end{equation}

Finally, the reduced $(n+1)$--colored Jones polynomial of $K$, denoted
by $J_K(n+1, q)$, is obtained from $G(n+1, A)$ by substituting
$q:=A^{-4}$.

Recall from the Introduction that $j(n)$ denotes the highest degree of
$J_K(n, q)$ in $q$, and $j^{*}(n)$ denotes the lowest degree. Following big--$O$ notation, we let $O(n)$ denote a term that is at most linear in $n$.

\begin{lemma} \label{lemma:adequate-limit}
Let $D$ be a link diagram with $c_+$ positive crossings and $c_-$
negative crossings.
\begin{itemize}
\item[(a)] If $D$ is $A$--adequate, then
$\displaystyle{j^{*}(n)= -  \frac{c_-}{2}n^2+ O(n).}$
\item[(b)] If $D$ is $B$--adequate, then
$\displaystyle{j(n)= \frac{c_+}{2}n^2+ O(n).}$
\end{itemize}
\end{lemma}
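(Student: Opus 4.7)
The plan is to read off $\max\deg_A G(n+1,A)$ from the state-sum expansion of the Kauffman bracket and then translate via $q=A^{-4}$ to obtain the lowest $q$-degree $j^*(n+1)$; part (b) will follow from (a) applied to the mirror of $D$, under which $A$- and $B$-adequacy, the two resolutions, and $c_\pm$ are interchanged while $J_K(n,q)$ becomes $J_K(n,q^{-1})$. The classical input needed is Kauffman's sharp degree bound: for an $A$-adequate $D$ with $c$ crossings,
\[
\max\deg_A \langle D \rangle \;=\; c + 2(|s_A| - 1),
\]
with the top monomial coming solely from the all-$A$ state and suffering no cancellation from the other states.

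Next I apply this input to the cabled diagrams in $S_n(D)$. The $n$-cable $D^n$ (blackboard framing) is again $A$-adequate when $D$ is, and the all-$A$ resolution at the $n^2$ crossings replacing a single crossing of $D$ opens that block into $n$ parallel strands; hence $|s_A(D^n)| = n\,|s_A(D)|$ and
\[
\max\deg_A \langle D^n \rangle \;=\; n^2 c + 2n|s_A(D)| - 2.
\]
Using \eqref{eq:chebyshev} to expand $S_n(D) = D^n + (1-n)D^{n-2} + \ldots$, the same formula applied to each $D^k$ shows that the bound $k^2 c + 2k|s_A(D)| - 2$ is strictly maximized at $k=n$ (the gap with $k=n-2$ is $4[(n-1)c + |s_A(D)|]$, which is positive whenever $c\ge 1$, the only non-trivial case). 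Since the top monomial from the all-$A$ state of $D^n$ does not cancel, $\max\deg_A \langle S_n(D)\rangle = n^2 c + 2n|s_A(D)| - 2$.

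Finally, the prefactor $((-1)^n A^{n^2+2n})^{-w}$ in \eqref{eq:unreduced} contributes $-w(n^2+2n)$ to the top $A$-degree, and $(A^4-A^{-4})/(A^{2n}-A^{-2n})$ contributes $4-2n$, giving
\[
\max\deg_A G(n+1,A) \;=\; (c-w)\,n^2 + O(n).
\]
Since $c - w = (c_+ + c_-) - (c_+ - c_-) = 2c_-$, substituting $q=A^{-4}$ yields $j^*(n+1) = -\tfrac{c_-}{2}\,n^2 + O(n)$, and a reindexing $n \mapsto n-1$ produces (a); part (b) follows by the mirror argument.

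The main obstacle, and the place I expect to need most care, is tracking the non-cancellation of top-degree contributions through both the cabling and the Chebyshev expansion: I need $D^n$ to be $A$-adequate with $|s_A(D^n)| = n|s_A(D)|$, the top monomials of $\langle D^k\rangle$ for different $k$ to live in strictly ordered degrees, and the rational factor $(A^4-A^{-4})/(A^{2n}-A^{-2n})$ to combine with $\langle S_n(D)\rangle$ to yield a Laurent polynomial whose leading behaviour agrees with the naive leading-term calculation above. Once those points are in hand, the remainder is elementary bookkeeping.
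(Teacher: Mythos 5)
Your proposal is correct and follows essentially the same route as the paper: the sharp Kauffman--Lickorish degree formula $\deg \langle D \rangle = c + 2v_A - 2$ for $A$--adequate diagrams, the $A$--adequacy of the blackboard cables $D^m$ with $c(D^m)=m^2c$ and $v_A(D^m)=m\,v_A$, the Chebyshev expansion to isolate the top cable, then the degree bookkeeping in $G(n,A)$, the substitution $q=A^{-4}$, and the mirror argument for part (b). Your explicit verification that the lower cables $D^{n-2}, D^{n-4},\dots$ contribute strictly smaller $A$--degrees (so no cancellation with the leading term) is a point the paper leaves implicit, but otherwise the arguments coincide.
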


\begin{proof} 
For part (a), let $D$ be an $A$--adequate diagram with $c = c(D) = c_+
+ c_-$ crossings. Let $v_A = v_A(D)$ be the number of vertices of
$\GA$, which is equal to the number of state circles in the all--$A$
state. Then, for every $m > 0$, the link diagram $D^m$ is also
$A$--adequate with $c(D^m)=m^2 c$ and $v_A(D^m)=m v_A$.

Let $\deg ( P(A) )$ denote the highest degree of a polynomial $P$ in
$A$.  Then the highest degree of $\langle D \rangle$ is $\deg \langle D \rangle = c+2v_A-2$; see \cite[Lemma
5.4]{lickorish:book} or \cite[Lemma 7.1]{dessin-jones} for a proof.  From equation (\ref{eq:unreduced}),
one can see that $\deg( G(n, \ A) )$ comes from the highest--degree
term of $\langle S_{n-1}(D) \rangle$.  Furthermore, by equation
(\ref{eq:chebyshev}) and the previous paragraph,
$$\deg \, \langle S_{n-1}(D) \rangle \: = \: \deg \, \langle D^{n-1}
\rangle \: = \: (n-1)^2 c + 2(n-1) v_A - 2.$$ Thus
\begin{eqnarray*}
\deg G(n, A) &=& -w(n^2+2n) + (4-2n) + \deg \, \langle S_{n-1}(D) \rangle \\
&=& -w(n^2+2n) + (4-2n) + (n-1)^2 c + 2(n-1) v_A - 2 \\
&=& (c-w) \, n^2 + O(n) \\
&=& ((c_+ + c_-) - (c_+ - c_-)) n^2 + O(n) \\
&=& 2 c_- n^2 + O(n).
\end{eqnarray*}

Finally, since $J_K(n, q)$, is obtained from $G(n, A)$ by substituting
$q:=A^{-4}$, we conclude that the lowest degree of $J_K(n, q)$ in $q$
is $j^{*}(n)= -c_- n^2/2+ O(n)$. This proves (a).

For part (b), it suffices to observe that the mirror image $D^{*}$ of
a $B$--adequate diagram $D$ will be $A$--adequate. Taking the mirror
image also interchanges positive and negative crossings, and replaces
$q$ with $q^{-1}$ in the colored Jones polynomials. Thus the result
follows from (a).
\end{proof}

We can now complete the proof of Theorem \ref{thm:main}.

\begin{proof}[Proof of Theorem \ref{thm:main}]
Let $D(K)$ be an $A$--adequate diagram. Then, by Theorem
\ref{thm:essential}, $S_A$ is an essential surface for $M_K$. By
Lemmas \ref{lemma:surface-slope} and \ref{lemma:adequate-limit}, the
boundary slope of $S_A$ is
$$-2c_- = \lim_{n \to \infty} 4\, n^{-2}j^*(n).$$
Similarly, if $D(K)$ is $B$--adequate, $S_B$ is an essential surface
with boundary slope
$$2c_+ =  \lim_{n \to \infty} 4\, n^{-2}j(n).$$
In particular, if $D(K)$ is adequate, then both sequences $\{ 4\,
n^{-2}j^*(n) \}$ and $\{ 4\, n^{-2}j(n) \}$ converge to boundary
slopes of $K$. For a non-trivial knot $K$, the slopes $-2c_-$ and
$2c_+$ are distinct, because at least one of the integers $c_-$ and
$c_+$ is strictly positive.
\end{proof}

\section{Examples}\label{sec:examples}

\begin{example}
Let $p$ be an odd integer, and let  $K_p$ denote the $(-2, 3, p)$  pretzel knot, with a standard pretzel diagram $D_p$. It is easy to check that $D_p$ is $A$--adequate iff $p>0$ and $B$--adequate iff $p<0$. Furthermore, for all values of $p$ except $p=-1$ (when $D_p$ is an unusual diagram of the $5_2$ knot), the knot $K_p$ does not admit an adequate diagram. This classical fact can also be seen via Theorem \ref{thm:main}, because all Jones slopes of adequate knots are integers.

Garoufalidis computed \cite{garoufalidis:jones-slopes} that
$$ \lim_{n \to \infty} \frac{4j(n)}{n^2} = \frac{2(p^2-p-5)}{p-3} \quad \mbox{if } p \geq 5, \qquad
\lim_{n \to \infty}  \frac{4j^*(n)}{n^2} = \frac{2(p+1)^2}{p} \quad \mbox{if } p \leq -3.$$
From the work of Hatcher and Oertel 
\cite{hatcher-oertel:montesinos-slopes} and
Dunfield \cite{dunfield:boundary-slopes}, it follows that these limiting numbers are indeed boundary slopes of $K_p$. Hence, since all Jones slopes of adequate knots are integers by Theorem \ref{thm:main}, we recover the classical fact that these knots are not adequate.
%
%
\end{example}

\begin{example}
Following \cite{lick-thistle:adequate-knots}, a $(2,2)$--tangle $T$ is called \emph{strongly
alternating} if each of the closures of $T$ is a reduced alternating
link diagram.  Any knot obtained as a Conway sum of two
strongly alternating tangles is then adequate (see also \cite{fkp:conway}).
For example, any non-alternating pretzel knot $K(a_1, \ldots, a_r, b_1, \ldots,
b_k)$ with with $a_i, b_j, r, k\geq 2$ is adequate.  Similarly, as
explained in \cite{lick-thistle:adequate-knots}, a reduced diagram of any Montesinos knot with 
at least two positive rational tangles and at least two negative rational tangles will be
adequate.
Theorem \ref{thm:main} implies that these knots satisfy Garoufalidis'
conjecture.
\end{example}

\begin{example}
Let $B_n$ denote the braid group on $n$ strings, and let $\sigma_1, \cdots,
\sigma_{n-1}$ be the elementary braid generators.  Let $D_b$ denote the closed braid diagram
 obtained from the braid
$b=\sigma_{i_1}^{r_1}\sigma_{i_2}^{r_2} \cdots \sigma_{i_k}^{r_k}$. If $r_j > 0$ for all $j$, the positive braid diagram $D_b$ will be $A$--adequate. Since all crossings in this braid are positive, $c_-=0$. Thus, by Lemma \ref{lemma:adequate-limit},
$$ \lim_{n \to \infty} 4\, n^{-2}j^*(n) \:=\: -2c_- \:=\: 0.$$
Furthermore, the essential surface $S_A$ whose boundary is this slope will be a fiber in $S^3 \setminus K$.

Under the stronger hypothesis that $r_j \geq 3$ for all $j$, the diagram $D_b$ is not only $A$--adequate but also $B$--adequate. Thus Theorem 
\ref{thm:main} applies. The other boundary slope detected by the colored Jones polynomials is
$$ \lim_{n \to \infty} 4\, n^{-2}j(n) \:=\: 2c_+ \:=\: 2 \sum_{j=1}^k r_j.$$
\end{example}

\bibliographystyle{hamsplain}
\bibliography{biblio.bib}

\end{document}